 \newcommand{\beq}{\begin{equation}}
\newcommand{\eeq}{\end{equation}}
\numberwithin{equation}{section}
\newtheorem{theorem}{Theorem}[section]
\newtheorem{proposition}{Proposition}[section]
\newtheorem{corollary}[theorem]{Corollary}
\newenvironment{proof}{\medbreak\noindent{\it Proof.}\rm}{\hfill$\square$\rm}
\newcommand{\PSH}{{\operatorname{PSH}}}
\newcommand{\Vol}{{\operatorname{Vol}}}
\newcommand{\Covol}{{\operatorname{Covol}}}
\newcommand{\Capa}{{\operatorname{Cap}}}
\newcommand{\Log}{{\operatorname{Log}\,}}
\newcommand{\Exp}{{\operatorname{Exp}\,}}
\newcommand{\BE}{{\mathbf E}}
\newcommand{\D}{{\mathbb D}}
\newcommand{\R}{{\mathbb R}}
\newcommand{\Rn}{{\mathbb R}^n}
\newcommand{\Rnm}{{\mathbb R}_-^n}
\newcommand{\Rnp}{{\mathbb R}_+^n}
\newcommand{\C}{{\mathbb  C}}
\newcommand{\ii}{{\mathbb  I}}
\newcommand{\Cn}{{\mathbb  C\sp n}}
\newcommand{\F}{{\mathcal F}}
\newcommand{\cL}{{\mathcal  L}}
\newcommand{\vph}{\varphi}
\begin{document}

\begin{center}
{\Large\bf Copolar convexity}
\end{center}

\begin{center}
{\large Alexander Rashkovskii}
\end{center}

\vskip1cm

\begin{abstract}
We introduce a new operation, {\it copolar addition}, on unbounded convex subsets of the positive orthant of $\Rn$ and establish convexity of the covolumes of the corresponding convex combinations. The proof is based on a technique of geodesics of plurisubharmonic functions. As an application, we show that there are no relative extremal functions inside a non-constant geodesic curve between two toric relative extremal functions.

\medskip\noindent
{\sl Mathematic Subject Classification}: 32U15, 32U20, 52A20, 52A39
\end{abstract}

\section{Introduction}
Recall that volumes of Minkowski convex combinations $P_t:=(1-t)P_0+t\,P_1$ of two convex bodies $P_j\subset \Rn$  satisfy
the classical Brunn-Minkowski inequality
$$\Vol(P_t)^{\frac1n} \ge (1-t)\,\Vol(P_0)^{\frac1n}+t\,\Vol(P_1)^{\frac1n}.$$
 For its various aspects and consequences, see \cite{Gar}.
We would like to mention here a few variants of the inequality. In the first one (proved under certain symmetry conditions),  the Minkowski combinations $P_t$ are replaced by certain {log-Minkowski combinations} \cite{BLYZ}, \cite{Sa}, \cite{XL}.
Another version considers, instead of the combinations $P_t$, balls in the norms of the complex Calder\'{o}n interpolated spaces \cite{CE}. Finally,
in \cite{KT}, convex bodies are replaced by {\it coconvex} ones, that is, bounded subsets of a closed strictly convex cone $C\subset\Rn$ whose complements to $C$ are convex, the operation on the coconvex set being taking the complement to the Minkowski sum of their complements. Consideration of such sets is motivated by their relations (mostly be means of Newton polyhedra \cite{Ku1}) to singularity theory, commutative algebra, and complex analysis \cite{KaK}, \cite{KV}, \cite{R3}, \cite{R4}, \cite{R9}.
In such a setting,  a {\sl reversed} Brunn-Minkowski inequality for coconvex sets was established in \cite{KT}; in terms of their complements $P_t$, it can be written as \beq\label{rBM}\Covol(P_t)^{\frac1n} \le (1-t)\,\Covol(P_0)^{\frac1n}+t\,\Covol(P_1)^{\frac1n},\eeq
where
\beq\label{covol} \Covol(P)=\Vol(C\setminus P).\eeq

In this note, we will deal (for the sake of simplicity) with proper convex subsets of the positive orthant $\Rnp$ of $\Rn$ with bounded complements, and the Minkowski addition will be replaced with a different operation, $\oplus$, which we will call {\it copolar addition} (see  (\ref{cop}) and (\ref{newsum}) for the definition), also motivated by consideration of Newton polyhedra and thus relevant to the aforementioned application areas. In a sense, such an operation is more natural than the Minkowski addition; for example, the copolar sum of cosimplices (complements of simplices) is still a cosimplex, while their Minkowski sum is a more complicated polyhedron. In Theorem~\ref{theo}, we prove the inequality
\beq\label{main}\Covol(P_t^\oplus)\le (1-t)\,\Covol(P_0)+ t\,\Covol(P_1),\eeq
which is strict unless $P_0=P_1$. We give a simple example (see a remark after the statement of Theorem~\ref{theo}) for which (\ref{main}) is sharper than (\ref{rBM}).

We prove (\ref{main}) by developing recent results on geodesics of plurisubharmonic functions \cite{R16} for relative extremal functions in a toric setting. In particular, we prove that, as in the case of compact manifolds, the Legendre transform of the convex image of any toric geodesic $u_t$ on a bounded Reinhardt domain is an affine function of $t$ (Theorem~\ref{guan1}).

Note that (\ref{main}) can be equivalently described as {\sl convexity} of the Monge-Amp\'ere capacity of multiplicative combinations $K_t^\times$ of logarithmically convex Reinhardt compacts of $\Cn$. Such combinations appear in the toric case of the complex Calder\'{o}n interpolated spaces, the corresponding version of the Brunn-Minkowski inequality \cite{CE}, \cite{CEK} expresses however {\sl logarithmic concavity} of the volumes of $K_t^\times$ (for convex $K_j$).

 The results are applied to plurisubharmonic functions and their singularities. In particular, we show that there is no relative extremal function inside a non-constant geodesic curve between two toric relative extremal functions (Corollary~\ref{uniq}).

\section{Newton sets and Monge-Amp\'ere masses}
\label{Motiv}
Here we recall some results on Monge-Amp\'ere masses of plurisubharmonic functions, related to the notion of Newton polyhedron. They can be viewed as a development of the fact that the multiplicity of a generic holomorphic mapping at the origin of $\Cn$ equals $n!$ times the covolume of its Newton polyhedron \cite{Ku1}.

\bigskip
\noindent
{\bf 2.1.}
 The first set of results concerns characteristics of singularities of psh functions, obtained in \cite{LeR},  \cite{R3}, \cite{R4}, \cite{R9}. Let $\vph$ be a plurisubharonic function on a neighborhood of $0\in\Cn$ such that $\vph(0)=-\infty$, and let $\Psi_\vph$ be its {\it indicator}, i.e.,
\beq\label{indic} \Psi_\vph(z)=\limsup_{y\to z} \lim_{m\to \infty} m^{-1}\vph(y_1^m,\ldots,y_n^m).\eeq
It is known that $\Psi_\vph$ is a toric (that is, independent of the arguments of the variables) nonpositive plurisubharmonic function in the unit polydisk $\D^n$, satisfying
$\Psi_\vph(|z_1|^c,\ldots,|z_n|^c)=c\,\Psi_\vph(z)$ for any $c>0$, and
$\vph\le \Psi_\vph+O(1)$ near $0$. If the Monge-Amp\'ere current $(dd^c \vph)^n$ is well defined near $0$ (for example, this is so if $0$ is an isolated singularity of $\vph$), then the Monge-Amp\'ere measure of $\Psi_\vph$ is concentrated at $0$:
\beq\label{Psi}(dd^c\Psi_\vph)^n=N_\vph\,\delta_0\eeq
 for some $N_\vph\ge 0$ (the {\it Newton number} of $\vph$ at $0$), and $(dd^c \vph)^n(0)\ge N_\vph$.

The Newton number can be computed as follows. Denote $$\Rnp=\{a\in\Rn:\: a_j>0,\ 1\le j\le n\},\quad \Rnm=-\Rnp,$$
and let the mappings $ \Exp: \Rnm\to\D^n$ and $\Log: \D^n\cap (\C_*)^n\to \Rnm$ be defined as
$$\Exp s = (e^{s_1},\ldots,e^{s_n}),\quad \Log z=(\log|z_1|,\ldots,\log|z_n|).$$ The {\it convex image} $\psi_\vph=\Exp^*\Psi_\vph$ of the indicator $\Psi_\vph$ is a convex, positive homogeneous function on $\Rnm$, i.e., $\psi_\vph(c\,s)=c\,\psi_\vph(s)$ for any $c>0$ and all $s\in\Rnm$. Therefore, $\psi_\vph$ is the restriction of the support function of a convex set $\Gamma_\vph\subset\Rnp$ to $\Rnm$:
$$\psi_\vph(s)=\sup_{a\in\Gamma_\vph} \langle a,s\rangle,\quad s\in\Rnm$$
(note that the support function of $\Gamma_u $ equals $+\infty$ outside $\overline{\Rnm}$). By relating the complex and real Monge-Amp\'ere operators, one computes then
\beq\label{Nvol} N_\vph=n!\,\Covol( \Gamma_\vph),\eeq
where the covolume is defined by (\ref{covol}) with $C=\Rnp$.
Moreover, the generalized Lelong number (in the sense of Demailly) $\nu(u,\vph):=dd^cu\wedge (dd^c\Psi_\vph)^{n-1}(0)$ of any psh function $u$ near $0$ with respect to $\Psi_\vph$ can be represented as a mixed covolume of the corresponding convex sets and computed as
$$\nu(u,\vph)=n!\int_{L_\vph} |\psi_\vph(s)|\,d\gamma_\vph(s),$$
where $\gamma_\vph$ is a positive measure supported on the set of extreme points of the convex set
\beq\label{Lvph} L_\vph=\{s\in\Rnm:\: \psi_\vph(s)\le -1\}.\eeq

For a holomorphic mapping $F$, the set $\Gamma_{\log|F|}$ is the {\sl Newton polyhedron of $F$ at $0$} in the sense of \cite{Ku1}, that is, the convex hull of the set
$$\bigcup_{k\in A} (k+\Rnp),$$ where $A$ is the collection of all multi-indices $k$ such that $z^k$ has a nonzero coefficient in the Taylor expansion of at least one of the components of $F$.

\bigskip
\noindent
{\bf 2.2.}   A slightly different setting studied in \cite{ARZ} concerns relative Monge-Amp\'ere capacities $\Capa(K)$
of subsets $K$ of bounded hyperconvex domains $\Omega\subset\Cn$ .
We recall that $\Capa(K)$ equals the total Monge-Amp\'ere mass of the relative extremal function
\beq\label{relext}\omega_K(z)=\limsup_{y\to z}\,\sup\{u(y):\: u\in\PSH(\Omega),\ u<0, \ u|_K\le-1\};\eeq
namely,
$$ \Capa(K)=(dd^c\omega_K)^n(\Omega)=(dd^c\omega_K)^n(K).$$

Let $\Omega$ be the unit polydisk $\D^n$, $D\Subset \D^n$ be a nonempty complete logarithmically convex Reinhardt domain, and $L=L_K\subset\Rnm$ be the logarithmic image of $K=\overline D$, that is, $L=\Log(K\cap (\C_*)^n)$. Then
\beq\label{omega} \omega_K(z)=\sup_{a\in\Rnp}\frac{\sum a_k\log|z_k|}{|h_D(a)|}, \quad z\in \D^n\setminus D,\eeq
and
\beq\label{CapK}\Capa(K)=n!\,\Covol(\Gamma_L),\eeq
where
\beq\label{GL} \Gamma_L=\{a\in\Rnp:\: h_L(a)\le -1\}\eeq
and $h_L(a)$ is the restriction of the support function of the (convex) set $L$ to $\Rnp$:
$$h_L(a)=\sup_{s\in L} \langle a,s\rangle,\quad s\in\Rnp$$
(and $h_L$ equals $+\infty$ outside $\overline\Rnp$).
Note that the right hand side of (\ref{omega}) can be written as $\Log^* h_{\Gamma_L}$.

In both the examples, one deals with pairs $\{L,\Gamma\}$ of unbounded convex subsets of $\Rnp$ and $\Rnm$, and the Monge-Amp\'ere masses are computed as covolumes of the corresponding subsets of $\Rnp$. The difference here is that in the first setting, one starts with a set in $\Rnp$ and arrives to a set in $\Rnm$, while in the second case the things go the other way round.

\section{Copolars}
\label{Copolars}

Now let us for a moment strip away the plurisubharmonic content of the above settings and concentrate on its convex counterpart.

Let $C_+$ denote the collection of closed convex subsets $\Gamma$ of $\overline\Rnp$ that are complete in the sense that $\Gamma+\Rnp\subset\Gamma$. Similarly, we introduce the family $C_-=-C_+$ of closed convex complete subsets of $\overline\Rnm$.

Given a set $A\in C_+$ or $A\in C_-$, we introduce its {\it copolar} $A^\circ$ as
\beq\label{cop} A^\circ=\{x\in\Rn:\: \langle x,y\rangle\le -1\ \forall y\in A\}.\eeq
Equivalently, the copolars can be described by means of the {\it support functions}
$$ h_A(x)=\sup_{y\in A} \langle x,y\rangle,\quad x\in\Rn;$$
namely,
$ A^\circ=\{x\in\Rn:\: h_A(x)\le -1\}$.

\begin{proposition}
\begin{enumerate}
\item If $A\in C_+$, then $A^\circ\subset\overline\Rnm$;
\item if $A\in C_-$, then $A^\circ\subset\overline\Rnp$;
\item $(C_+)^\circ=C_-$ and $ (C_-)^\circ=C_+$;
\item if $A$ belongs either to $C_+$ or to $C_-$, then $(A^{\circ})^\circ=A$.
\end{enumerate}
\end{proposition}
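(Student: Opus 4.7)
\medskip\noindent\emph{Proof plan.} The plan is to dispatch items (1)–(2) by a direct recession-direction argument, to deduce (3) from (1)–(2) together with (4), and to concentrate the main effort on the bipolar identity (4). For (1), if $A\in C_+$ and some $x\in A^\circ$ had a positive coordinate $x_k>0$, completeness $A+\Rnp\subset A$ would allow $y+t\,e_k\in A$ for every $y\in A$ and $t\ge 0$, whereupon $\langle x,y+t\,e_k\rangle=\langle x,y\rangle+t\,x_k\to+\infty$ would contradict $\langle x,\cdot\rangle\le-1$ on $A$. Item (2) is the symmetric statement with $\overline{\Rnp}$ and $\overline{\Rnm}$ interchanged.

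For (3), I would verify directly that $A\in C_+$ implies $A^\circ$ is closed and convex (as an intersection of closed half-spaces), lies in $\overline{\Rnm}$ by (1), and is complete in the negative sense: for $x\in A^\circ$, $u\in\Rnm$ and $y\in A\subset\overline{\Rnp}$ one has $\langle u,y\rangle\le 0$, hence $\langle x+u,y\rangle\le-1$. This yields $(C_+)^\circ\subset C_-$, and the symmetric argument gives $(C_-)^\circ\subset C_+$; the reverse inclusions, and hence the equalities in (3), drop out of (4), because every $B\in C_-$ can be written as $(B^\circ)^\circ$ with $B^\circ\in C_+$.

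The heart of the proof is (4). The inclusion $A\subset(A^\circ)^\circ$ is immediate from the symmetric form of the definition. For the reverse inclusion, given $y_0\notin A$ I would apply Hahn–Banach strict separation to obtain $x_0\in\Rn$ and $c\in\R$ with $h_A(x_0)\le c<\langle x_0,y_0\rangle$; the same recession argument as in (1) forces $x_0\in\overline{\Rnm}$, and then $A\subset\overline{\Rnp}$ gives $h_A(x_0)\le 0$. When $h_A(x_0)<0$, rescaling by $\lambda=-1/h_A(x_0)>0$ places $\lambda x_0$ in $A^\circ$ (since $h_A(\lambda x_0)=-1$), while $\langle\lambda x_0,y_0\rangle>\lambda c\ge-1$, witnessing $y_0\notin(A^\circ)^\circ$.

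The main obstacle is the degenerate case $h_A(x_0)=0$, where the separating functional is an asymptotic direction of $A$ and no positive rescaling carries $x_0$ into $A^\circ$. My plan is a perturbation argument: excluding the trivial edge case $A=\overline{\Rnp}$ (the only element of $C_+$ containing the origin, for which $A^\circ=\emptyset$ and the identity must be read separately), the closedness of $A$ together with $0\notin A$ gives $\inf_{y\in A}\sum_k y_k>0$, so $x_1=-(1,\dots,1)\in\Rnm$ satisfies $h_A(x_1)<0$. Subadditivity and positive homogeneity of $h_A$ then yield $h_A(x_0+\varepsilon x_1)\le\varepsilon h_A(x_1)<0$ for every $\varepsilon>0$, while for $\varepsilon$ small the strict separation $\langle x_0+\varepsilon x_1,y_0\rangle>c$ persists; applying the previous case to $x_0+\varepsilon x_1$ then produces the required element of $A^\circ$.
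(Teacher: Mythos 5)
Your proof is correct and follows the same basic route as the paper's: items (1)--(3) from completeness and convexity, and item (4) by producing, for each point outside $A$, a separating functional normalized to lie on the level set $\{h_A=-1\}$, i.e.\ in $A^\circ$. The difference is in rigor at exactly the point where the paper is terse. The paper's proof of the reverse inclusion simply asserts that, by completeness of $A$, there exists $x^\ast$ with $h_A(x^\ast)=-1$ and $\langle x^\ast,a\rangle>-1$; your argument shows why such an $x^\ast$ exists: plain Hahn--Banach separation yields $x_0\in\overline\Rnm$ with $h_A(x_0)\le c<\langle x_0,y_0\rangle$ and $h_A(x_0)\le 0$, and only when $h_A(x_0)<0$ can one rescale onto the level set $\{h_A=-1\}$. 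Your perturbation $x_0\mapsto x_0+\varepsilon x_1$ with $x_1=-(1,\dots,1)$, using $\inf_{y\in A}\sum_k y_k>0$ (which does follow from closedness of $A$ together with $0\notin A$, since a minimizing sequence in $\overline\Rnp$ would converge to the origin) and subadditivity of $h_A$, is precisely the missing step that disposes of the degenerate case $h_A(x_0)=0$. You also correctly flag that the bipolar identity fails literally for $A=\overline\Rnp$, where $A^\circ=\emptyset$ and hence $(A^\circ)^\circ=\Rn$ --- an edge case the paper passes over silently but implicitly excludes later by working in $CC_+\setminus\{\overline\Rnp\}$. In short: same approach, but your version supplies the details the paper omits.
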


\begin{proof} The first two assertions follow from the completeness of $A$, and the third one follows from the convexity.

To prove the last one, assume $A\in C_-$ be closed, consider the support functions $h_A$ and $h_{A^\circ}$ and put
$B=\{y\in\Rnm:\: h_{A^\circ}(y)\le -1\}$. We claim that $B=A$.

First, let $a\in A$, then $h_A(x)\ge \langle x,a\rangle$ for any $x\in \Rnp$. Denote
$$\delta A^\circ={\overline{\partial L^\circ\cap\Rnp}}=\{x\in\Rn:\: h_A(x)= -1\}.$$
Then
$$ h_{A^\circ}(a)=\sup\{\langle x,a\rangle:\: x\in \delta A^\circ\}\le \sup\{h_A(x):\: x\in \delta A^\circ\}=-1, $$
which shows $A\subset B$.

To prove the reverse inclusion, take any $a\in \Rnm\setminus A$. Since the set $A$ is complete, there exists a point $x^\ast\in \delta A^\circ$ such that
$ \langle x^\ast,a\rangle > h_L(a^\ast)$. We have then
$$ h_{A^\circ}(a)=\sup\{\langle x,a\rangle:\: x\in \delta A^\circ\}\ge \langle x^\ast,a\rangle > h_L(a^\ast)=-1,$$
so $a\not\in B$ and thus, $B\subset A$.
\end{proof}

\medskip

So far, we have had no essential difference between the collections $C_+$ and $C_-$. From now on, we will be considering a subclass of sets in $C_+$ whose copolars will not belong to the corresponding subclass of $C_-$. Namely, we will restrict ourselves to the class $CC_+$ of {\it cobounded} sets in $C_+$ (those with bounded complements to $\Rnp$). It is easy to see that $P\in CC_+$  if and only if $P^\circ\subset s^\ast+\Rnm $ for some $s^\ast\in\Rnm$; we denote the latter subclass of $C_-$ by $UC_-$, so $(CC_+)^\circ= UC_-$ and $(UC_-)^\circ=CC_+$.

\medskip

Let $\cL\{f\}$ denote the {\it Legendre transform} of a function $f$:
$$ \cL[f](y)=\sup_{x\in\Rn}\{\langle x,y\rangle -f(x)\}.$$
We recall that $\cL[f]$ is always a convex, lower semicontinuous ({\it lsc}) function and $\cL[\cL[f]]$ is the largest convex lsc minorant of $f$.

In particular, the support function $h_A$ of a closed convex set $A$ is the Legendre transform of the indicator function $\ii_A$ of $A$ (which equals $0$ on $A$ and $+\infty$ on $\Rn\setminus A$), and the following formula (which is to be used later on) for cutoff functions expresses the copolarity as a Legendre duality.

\begin{proposition}\label{Leg} Let $L\in UC_-$, then
$$ \cL[\max \{h_{L^\circ},-1\}]=\max\{h_L+1,0\}.$$
\end{proposition}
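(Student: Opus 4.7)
The plan is to verify the identity by computing $\cL[\tilde h](y)$ directly, where $\tilde h := \max\{h_{L^\circ}, -1\}$, and splitting into cases by the position of $y$. First I would unpack $\tilde h$ itself. Coboundedness $L^\circ + \overline{\Rnp} \subset L^\circ$ makes $h_{L^\circ} = +\infty$ outside $\overline{\Rnm}$; inside $\overline{\Rnm}$, the copolar involution $(L^\circ)^\circ = L$ translates into the sublevel description $L = \{x \in \overline{\Rnm} : h_{L^\circ}(x) \le -1\}$. Consequently $\tilde h = +\infty$ off $\overline{\Rnm}$, $\tilde h \equiv -1$ on $L$, and $\tilde h = h_{L^\circ}$ on $\overline{\Rnm}\setminus L$.

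The easy cases dispatch quickly. If $y_i < 0$ for some $i$, pick any $s \in L$; the completeness $L + \overline{\Rnm} \subset L$ lets me test $x = s - t e_i \in L$ (with $\tilde h(x) = -1$), giving $\langle s - te_i, y\rangle + 1 \to +\infty$ as $t \to +\infty$, which matches $h_L(y) + 1 = +\infty$ on the right. If $y \in L^\circ$ (so $h_L(y) \le -1$ and the right-hand side equals $0$), the test $x = 0$ gives the lower bound $\cL[\tilde h](y) \ge -\tilde h(0) = 0$, while the pointwise inequality $\tilde h \ge h_{L^\circ}$ combined with the standard identity $\cL[h_{L^\circ}] = \ii_{L^\circ}$ forces $\cL[\tilde h](y) \le \ii_{L^\circ}(y) = 0$.

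The main case is $y \in \overline{\Rnp} \setminus L^\circ$, where $h_L(y) + 1 > 0$. The lower bound is immediate from the sup over $x \in L$: $\sup_{x \in L}(\langle x, y\rangle + 1) = h_L(y) + 1$. For the reverse inequality, the bound $\langle x, y\rangle + 1 \le h_L(y) + 1$ is tautological for $x \in L$, so the substantive step is bounding $\langle x, y\rangle - h_{L^\circ}(x)$ for $x \in \overline{\Rnm} \setminus L$. Here I would use the scaling trick: set $c := -h_{L^\circ}(x) \in [0, 1)$; if $c > 0$, positive homogeneity of $h_{L^\circ}$ yields $h_{L^\circ}(x/c) = -1$, hence $x/c \in L$, and then
\[
\langle x, y\rangle - h_{L^\circ}(x) = c\bigl(\langle x/c, y\rangle + 1\bigr) \le c\,(h_L(y)+1) \le h_L(y)+1,
\]
using $c < 1$ and $h_L(y) + 1 > 0$. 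The degenerate case $c = 0$ is trivial: $\langle x, y\rangle \le 0 \le h_L(y) + 1$.

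The only non-routine ingredient is the homogeneous rescaling $x \mapsto x/c$ in the displayed line, which converts an arbitrary $x \notin L$ into a point of $L$ over which $h_L(y)$ can be controlled; it hinges on both the positive homogeneity of the support function and the copolar characterization $L = \{h_{L^\circ} \le -1\}$. Everything else is case bookkeeping together with the duality $\cL[h_A] = \ii_A$.
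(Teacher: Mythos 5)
Your proof is correct, but it takes a genuinely different route from the paper's. The paper proves the identity by dualizing: it writes $\max\{h_{L^\circ},-1\}=\cL[\min\{\ii_{L^\circ},\ii_{\Rnp}+1\}]$ via the rule $\cL[\min\{f,g\}]=\max\{\cL[f],\cL[g]\}$, so that the left-hand side becomes the biconjugate of the function equal to $0$ on $L^\circ$, $1$ on $\overline\Rnp\setminus L^\circ$ and $+\infty$ elsewhere, i.e.\ its largest convex lsc minorant; it then identifies that envelope with $\max\{h_L+1,0\}$ by comparing both with the affine function on each ray from the origin through a point of $\delta L^\circ$. You instead compute $\cL[\max\{h_{L^\circ},-1\}](y)$ head-on, splitting by the location of $y$ and handling the only nontrivial case ($y\in\overline\Rnp\setminus L^\circ$, $x\in\overline\Rnm\setminus L$) with the rescaling $x\mapsto x/c$, $c=-h_{L^\circ}(x)\in(0,1)$, which lands in $L=\{h_{L^\circ}\le -1\}$ by positive homogeneity --- this plays exactly the role that the radial segment argument plays in the paper, just run in the dual variable. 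Your version is more elementary (it needs only $\cL[h_A]=\ii_A$, homogeneity of support functions, and the involution $(L^\circ)^\circ=L$, all available from the preceding proposition) and it makes the equality verifiable pointwise; the paper's version is shorter on the page and reuses the biconjugation formalism that recurs later in the geodesic arguments. Both implicitly assume $L$ and $L^\circ$ are nonempty, which is harmless given the standing exclusion of the degenerate set $\overline\Rnp$ elsewhere in the paper.
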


\begin{proof} As is known,
$$\cL[\min\{f,g\}]=\max\{\cL[f],\cL[g]\}. $$
Applying this to $f=\ii_{L^\circ}$ and $g=\ii_{\Rnp}+1$, we conclude that that $\cL[\max \{h_{L_0},-1\}]$ is the largest convex lsc minorant of the function equal to $0$ on $L^\circ$ and to $1$ on $\Rnp\setminus L^\circ$. In particular, its restriction to any segment $l$ connecting the origin with abitrary $p\in\delta L^\circ$ does not exceed the affine function $F_l$ on $l$ defined by $F_l(0)=1$ and $F_l(p)=0$.

Since  the function $h_L+1$ satisfies the bounds and its restriction to $l$ coincides with $F_l$ for any $l$, this completes the proof.
\end{proof}

\medskip

We introduce the {\it copolar addition} on $CC_+$ as
\beq\label{newsum} P\oplus Q=\left(P^\circ + Q^\circ\right)^\circ.\eeq
Evidently, $P\oplus Q\in CC_+$ if $P,Q\in CC_+$.

\medskip

{\bf Example.} A set $L\in CC_+$ of the form $L=\{a\in\Rnp:\: \langle a,b_L\rangle\ge 1\}$ for some vector $b_L\in\Rnp$ will be called a {\it cosimplex}. It is easy to see that if $P$ and $Q$ are cosimplices in $\Rnp$, then $P\oplus Q$ is a cosimplex as well, and its reference vector $b=b_{P\oplus Q}$  is the one whose components $b_j$ satisfy
$$b_j^{-1}=b_{Pj}^{-1} + b_{Qj}^{-1},\quad 1\le j\le n.$$

\section{Convexity of covolumes}

Given $P_0,P_1\in CC_+$, we form the collection of {\it copolar combinations}
\beq\label{copcom} P_t^\oplus=\left((1-t)P_0^\circ+tP_1^\circ\right)^\circ,\quad 0< t< 1.\eeq

\begin{theorem}
\label{theo}
Copolar combinations of sets $P_0,P_1\in CC_+\setminus\{\overline\Rnp\}$ satisfy
\beq\label{CoBM}\Covol(P_t^\oplus)\le (1-t)\,\Covol(P_0)+ t\,\Covol(P_1),\quad 0<t<1;\eeq
an equality in (\ref{CoBM}) occurs for some $t\in (0,1)$ if and only if $P_0=P_1$.
\end{theorem}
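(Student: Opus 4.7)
The strategy is to translate (\ref{CoBM}) into a convexity statement for Monge--Amp\`ere capacities of Reinhardt compacts, which I then attack via the toric psh geodesic joining the two relative extremal functions. Set $L_j:=P_j^\circ\in UC_-$ and let $K_j\Subset\D^n$ be the logarithmically convex Reinhardt compact with $\Log(K_j\cap(\C_*)^n)=L_j$; formula (\ref{CapK}) gives $n!\,\Covol(P_j)=\Capa(K_j)$. The multiplicative combination $K_t^\times$ of $K_0$ and $K_1$ has logarithmic image $L_t:=(1-t)L_0+tL_1$, and $L_t^\circ=((1-t)P_0^\circ+tP_1^\circ)^\circ=P_t^\oplus$, so (\ref{CoBM}) is equivalent to
\beq\label{plan:cap}\Capa(K_t^\times)\le(1-t)\Capa(K_0)+t\Capa(K_1).\eeq

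Let $u_t$ be the psh geodesic in $\D^n$ joining $\omega_{K_0}$ and $\omega_{K_1}$; since both endpoints are toric, so is $u_t$, and I denote its convex image on $\Rnm$ by $\tilde u_t$. Theorem~\ref{guan1} says that $\cL[\tilde u_t]$ is an affine function of $t$; combined with Proposition~\ref{Leg}, which identifies $\cL[\tilde\omega_{K_j}](a)=\max\{h_{L_j}(a)+1,0\}$, this gives
$$\cL[\tilde u_t](a)=(1-t)\max\{h_{L_0}(a)+1,0\}+t\max\{h_{L_1}(a)+1,0\}.$$
Since $h_{L_t}=(1-t)h_{L_0}+t\,h_{L_1}$ and $x\mapsto\max\{x,0\}$ is convex, we get $\cL[\tilde\omega_{K_t^\times}](a)=\max\{h_{L_t}(a)+1,0\}\le\cL[\tilde u_t](a)$ at every $a\in\Rnp$. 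Applying the order-reversing Legendre transform yields $\tilde\omega_{K_t^\times}\ge\tilde u_t$, and hence the pointwise comparison $u_t\le\omega_{K_t^\times}\le 0$ on $\D^n$.

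Now introduce the pluricomplex energy $\mathcal{E}(u):=-\int_{\D^n}u(dd^c u)^n$; since $(dd^c\omega_K)^n$ is supported where $\omega_K=-1$, $\mathcal{E}(\omega_K)=\Capa(K)$. The result follows from two standard properties: (i) $\mathcal{E}$ is affine along psh geodesics, so $\mathcal{E}(u_t)=(1-t)\Capa(K_0)+t\Capa(K_1)$; (ii) $\mathcal{E}$ is monotone on bounded psh functions vanishing on $\partial\D^n$, i.e. $u\le v\le 0$ implies $\mathcal{E}(u)\ge\mathcal{E}(v)$. Combining (ii) with $u_t\le\omega_{K_t^\times}$ gives $\Capa(K_t^\times)=\mathcal{E}(\omega_{K_t^\times})\le\mathcal{E}(u_t)$, and together with (i) this is exactly (\ref{plan:cap}).

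For strictness, equality in (\ref{CoBM}) at some $t\in(0,1)$ propagates equality throughout the chain, in particular forcing $u_t=\omega_{K_t^\times}$; this requires the pointwise identity $\max\{(1-t)\alpha+t\beta,0\}=(1-t)\max\{\alpha,0\}+t\max\{\beta,0\}$ at every $a\in\Rnp$, with $\alpha=h_{L_0}(a)+1$ and $\beta=h_{L_1}(a)+1$. Since this identity fails as soon as $\alpha$ and $\beta$ have opposite nonzero signs, $h_{L_0}+1$ and $h_{L_1}+1$ share the same sign at every $a\in\Rnp$, which forces $P_0=P_1$. The main technical obstacle will be establishing the two energy properties in the polydisk setting, most notably the affineness of $\mathcal{E}$ along the toric geodesic (which one hopes to read off from the affineness in Theorem~\ref{guan1}) together with the monotonicity lemma, as well as ensuring existence and toricness of the psh geodesic between $\omega_{K_0}$ and $\omega_{K_1}$ in the hyperconvex polydisk.
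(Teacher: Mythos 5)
Your proposal is correct and follows essentially the same route as the paper: reduce (\ref{CoBM}) via (\ref{CapK}) to convexity of $\Capa(K_t^\times)$, which the paper obtains by citing \cite[Thm.~4.3]{R16} and which you re-derive by exactly the mechanism sketched in Section~5 (affineness of the energy $\BE$ along the toric geodesic, monotonicity of $\BE$, and the comparison $u_t\le\omega_{K_t^\times}$ coming from Theorem~\ref{guan1} and Proposition~\ref{Leg}). Your treatment of the equality case likewise matches Corollaries~\ref{uniq} and \ref{uniq1}, with your pointwise sign analysis of $\max\{(1-t)\alpha+t\beta,0\}$ being a slightly more direct version of the argument in Corollary~\ref{uniq}.
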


{\it Remark.} Unlike standard versions of Brunn-Minkowski type inequalities, there is no $\frac1n$ exponents in (\ref{CoBM}).
As was shown in \cite{KT} (in a more general setting), the covolumes of sets from $CC_+$ satisfy a reversed Aleksandrov-Fenchel inequality and, as a consequence, the reversed Brunn-Minkowski inequality (\ref{rBM}) with respect to the standard Minkowski combinations
$P_t=(1-t)P_0+t\,P_1$. None of the inclusions $ P_t^\oplus\subset P_t$ and $P_t\subset P_t^\oplus$ is generally true. As an example, consider
$$P_0=\{a\in \R^2:\: {a_1}/3+a_2\ge 1\}, \quad P_1=\{a\in \R^2:\: a_1+{a_2}/3\ge 1\}.$$ Then $P_{1/2}$ is the polyhedron in $CC_+$ with vertices at $(2,0)$, $(1/2,1/2)$ and $(0,2)$, while $P_{1/2}^\oplus=\{a\in \R^2:\: a_1+a_2\ge 3/2\}$. Since $\Covol(P_{1/2})=1$, $\Covol(P_{1/2}^\oplus)=9/8$ and $\Covol(P_0)=\Covol(P_1)$, in this example (\ref{CoBM}) is sharper than (\ref{rBM}).

We do not know if a corresponding inequality for the copolar addition involving $\Covol^{\frac1n}$ is true.

\begin{proof}
To prove (\ref{CoBM}), we recall the motivation examples from Section~\ref{Motiv} and run them the opposite way. Let $P\in CC_+\setminus\{\overline\Rnp\}$, then $L:=P^\circ\in (CC_+)^\circ$. In the terminology of Section~\ref{Copolars}, the set $\Gamma_L$ defined by (\ref{GL}) is exactly the copolar $L^\circ$ to $L$, that is, $\Gamma_L=P$.
The set $K:=\overline{\Exp(L)}$
is the closure of a complete logarithmically convex Reinhardt subdomain of the unit polydisk $\D^n$. By (\ref{CapK}), we have then
\beq\label{CapK1} \Capa(K) =n!\, \Covol(P).\eeq

Starting with $P_0,P_1\in CC_+\setminus\{\overline\Rnp\}$, we get the family $P_t^\oplus\subset CC_+$ of their copolar combinations and then, as described above, the corresponding Reinhardt compact subsets $K_t^\times$ of $\D^n$ that are actually the multiplicative combinations of $K_0$ and $K_1$:
 $$\Log K_t^\times= (1-t)\Log K_0+t\,\Log K_1.$$
 Now we use \cite[Thm. 4.3]{R16} stating that, in this setting,
 \beq\label{capin}\Capa(K^\times)\le (1-t)\,\Capa(K_0)+t\,\Capa(K_1).\eeq
By (\ref{CapK1}) and the definition of copolar addition, we get the desired inequality.

The equality statement will be proved in Section~\ref{Appl} (see Corollary~\ref{uniq1}).
\end{proof}

\medskip
{\it Remark.} The multiplicative combinations $K_t^\times$ of convex Reinhardt bodies is the toric version of the balls in the norms of the complex Calder\'{o}n interpolated spaces, mentioned in the introduction. As was proved in \cite{CE}, their {\sl volumes} satisfy the ({\sl non-reversed}) Brunn-Minkowski inequality
\beq\label{CE} \Vol(K_t^\times) \ge \Vol(K_0)^{1-t}\,\Vol(K_1)^{t},\eeq
which is a bound on the size of $K_t^\times$ opposite to (\ref{capin}). Note also that (\ref{capin}) means {\sl convexity} of the capacities for {\sl logarithmically convex} Reinhardt bodies, while (\ref{CE}) expresses  {\sl logarithmic concavity} of the volumes for {\sl convex} Reinhardt bodies.

\medskip
In the next section, we explain where the crucial inequality (\ref{capin}) comes from.

\section{Representation of toric geodesics}

The proof of Theorem~\ref{theo} is based on a technique of geodesics, developed first for metrics on compact K\"{a}hler manifolds (see \cite{G12} and bibliography therein) and applied then to plurisubharmonic functions on domains of $\Cn$ in \cite{R16}, \cite{Ho}. In the latter setting, plurisubharmonic functions on a bounded hyperconvex domain $\Omega\subset\Cn$ with zero bounded values on $\partial\Omega$ are considered. A {\it subgeodesic} of two such functions, $u_0$ and $u_1$, is a family of functions $v_t$, $0<t<1$, such that $v(z,\zeta):=v_{\log|\zeta|}(z)$ belongs to the class $W(u_0,u_1)$ of nonpositive plurisubharmonic functions on the product $\Omega\times\{1<|\zeta|<e\}$ whose boundary values on $\Omega\times\{\log|\zeta|=j\}$ do not exceed $u_j$. The {\it geodesic} of $u_0$ and $u_1$ is their largest subgeodesic.

It was shown in \cite{R16} that for $u_j$ from Cegrell's energy class $\F_1(\Omega)$, the geodesic $u_t$ attains $u_j$ as its boundary values (as the uniform limits if $u_j$ are bounded, and in capacity in the general case) as $t\to j$ and, furthermore, the energy functional
$$\BE(u)= \int_\Omega u(dd^c u)^n$$ is affine on $u_t$. If $\omega_K$ is the relative extremal function (\ref{relext}) of a compact subset $K$ of $\Omega$, then $$\BE(\omega_K)=-\Capa (K),$$ and this is where the Monge-Amp\'ere capacities come into the picture.

For toric plurisubharmonic functions, this can be translated to the language of convex functions on $\Rnm$ (as was done in \cite{BB} for a global setting of $\Rn$), and the geodesics give rise to the multiplicative convex combinations $K_t^\times$ as in the proof of Theorem~\ref{theo}. Moreover, such geodesics can be described in a way which is a bit similar to the copolar addition. Any toric plurisubharmonic function $u$ on $\D^n$ with zero boundary values on $\partial\D^n$ can be identified with its convex image
$$\check u=\Exp^* u,$$ which is a convex function on $\Rnm$, increasing in each variable and equal to $0$ on $\partial\Rnp$.
Let $u_0$ and $u_1$ be two toric plurisubharmonic functions on $\D^n$ with zero boundary values,
 and let $u_t$, $0<t<1$, be the corresponding geodesic. Geodesics on the space of K\"{a}hler metrics on toric K\"{a}hler manifolds can be characterized as those whose Legendre transforms are affine in $t$ (\cite{Gu}, see also \cite{BB}, \cite{G15}), which can be proved by differentiating the defining equation for the Legendre transform of the geodesic. A similar fact is true in the local setting as well; since we have a non-smooth situation, we give an independent proof.

\begin{theorem}\label{guan1}  Let $u_t$ be the geodesic of two toric plurisubharmonic function $u_0$ and $u_1$ on $\D^n$ with zero boundary values. Then its convex image $\check u_t$ has the representation
 \beq\label{guan}\check u_t =\cL\left[ (1-t)\cL[\check u_0] + t\cL[\check u_1]\right].
 \eeq
 \end{theorem}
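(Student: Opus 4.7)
The plan is to prove two inequalities: that the right-hand side of (\ref{guan}) is the convex image of a toric subgeodesic (giving the $\le$ direction), and that any toric subgeodesic $\check w_t$ of $u_0,u_1$ must satisfy $\check w_t\le \cL\left[(1-t)\cL[\check u_0]+t\cL[\check u_1]\right]$ (giving the $\ge$ direction when applied to the geodesic itself).

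For the first direction, define $v_t(s):=\cL\left[(1-t)\cL[\check u_0]+t\cL[\check u_1]\right](s)$. Writing this out as
$$v_t(s)=\sup_{x\in\overline\Rnp}\left\{\langle x,s\rangle -(1-t)\cL[\check u_0](x)-t\cL[\check u_1](x)\right\},$$
the integrand is jointly affine in $(s,t)$ for each fixed $x$ (recall that $\cL[\check u_j]$ equals $+\infty$ outside $\overline\Rnp$, so the supremum is automatically restricted there), hence $v_t(s)$ is jointly convex in $(s,t)$ and nondecreasing in each $s_j$. Since each $\check u_j\le 0$ one checks $\cL[\check u_j]\ge 0$ and $v_t\le 0$ on $\Rnm$. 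Moreover, $\check u_j$ is convex and lsc, so $\cL\cL[\check u_j]=\check u_j$, which yields $v_0=\check u_0$ and $v_1=\check u_1$. Thus the associated toric function $V(z,\zeta):=v_{\log|\zeta|}(\Log z)$ belongs to $W(u_0,u_1)$, so $V\le u_t$ by definition of the geodesic; equivalently $v_t\le\check u_t$.

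For the reverse inequality, let $w_t$ be any toric subgeodesic with convex image $\check w_t$; then $\check w(s,\tau):=\check w_\tau(s)$ is jointly convex in $(s,\tau)$ on $\Rnm\times(0,1)$ with $\check w_j\le\check u_j$ at the endpoints. Fix $x\in\overline\Rnp$ and set $f(t):=\cL[\check w_t](x)=\sup_{s\in\Rnm}\{\langle x,s\rangle-\check w_t(s)\}$. The function $(s,t)\mapsto \langle x,s\rangle-\check w_t(s)$ is concave in $(s,t)$ (linear in $s$ minus jointly convex), and the supremum over $s$ of a jointly concave function is concave in $t$. Since $\cL$ reverses order, $f(0)\ge\cL[\check u_0](x)$ and $f(1)\ge\cL[\check u_1](x)$, and concavity gives
$$f(t)\ge(1-t)f(0)+tf(1)\ge(1-t)\cL[\check u_0](x)+t\cL[\check u_1](x).$$
Applying $\cL$ (again order-reversing) and using $\cL\cL[\check w_t]=\check w_t$, we conclude $\check w_t\le v_t$. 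Taking $w_t=u_t$ completes the proof.

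The main technical obstacle will be justifying the boundary inequalities $\check w_j\le\check u_j$ when the $u_j$ are unbounded: in the general case the geodesic only attains its data in capacity, so the endpoint comparisons must be interpreted in the appropriate almost-everywhere or usc-regularized sense. This should be handled either by approximating $u_j$ by a decreasing sequence of bounded toric plurisubharmonic functions (for which both sides of (\ref{guan}) are stable under decreasing limits, since Legendre transform is order-reversing and continuous under monotone convergence of convex functions) or by invoking the sub-mean value property along $\{\log|\zeta|=j\}$ directly to read off the boundary inequality for convex images. The interior steps — joint convexity of $v_t(s)$ in $(s,t)$, monotonicity in each coordinate, and the concavity argument for $f(t)$ — are robust and do not require smoothness of the $u_j$.
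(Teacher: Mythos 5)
Your proposal is correct and follows the same overall strategy as the paper: identify the right-hand side of (\ref{guan}) as a toric subgeodesic (giving one inequality) and show it dominates every toric subgeodesic (giving the other), then treat unbounded $u_j$ by monotone approximation with the cutoffs $\max\{u_j,-N\}$, exactly as the paper does. The one genuine difference is how the domination step is executed. The paper introduces the $(n+1)$-dimensional Legendre transform $\cL^*$ in $(s,t)$, uses the identity $\cL^*[\check v](a,r)=\sup_{0\le t\le 1}\{\cL[\check v_t](a)+t\,r\}$ to show that $\cL^*$ of any subgeodesic dominates $\max\{\cL[\check u_0](a),\cL[\check u_1](a)+r\}$, computes that the candidate achieves this maximum exactly, and concludes by biconjugation. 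You instead fix $x$ and observe that $t\mapsto\cL[\check w_t](x)$ is concave (partial maximization of a jointly concave function), so it lies above the affine interpolant of its endpoint values; applying $\cL$ once more gives the comparison. These are dual formulations of the same fact -- the paper's $\cL^*$ identity is precisely the Legendre transform in $t$ of your concave function $f$ -- so neither buys much over the other, though your version avoids invoking biconjugation in $n+1$ variables and isolates more explicitly where joint convexity of $\check w(s,t)$ enters. Two small points to tighten: the endpoint inequalities $f(j)\ge\cL[\check u_j](x)$ should be phrased via $\liminf_{t\to j}f(t)\ge\cL[\check u_j](x)$ (a subgeodesic only satisfies $\limsup_{t\to j}\check w_t\le\check u_j$), which still suffices for a concave function on $(0,1)$; and for unbounded $u_j$ the domains of $\cL[\check u_0]$ and $\cL[\check u_1]$ need not interact well, so the reduction to bounded cutoffs -- which you flag and the paper carries out first -- is the cleanest way to make both halves of the argument rigorous.
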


{\it Remark.} Observe a resemblance between (\ref{guan}) and  (\ref{copcom}).

\begin{proof}
Note that $\check u(t,s):=\check u_t(s)$ is convex in $n+1$ variables, although $\cL[\check u]$ is not. To avoid this difficulty, we consider the $(n+1)$-dimensional Legendre transform
$$\cL^*[f](y,r)=\sup\{\langle x,y\rangle + t\cdot r -f(x,t):\: x\in\Rn,\ r\in\R\}$$
of convex functions $f(x,t)$ on $\Rn\times\R$ and apply it to the function $\check u$; to get $\check u$ defined on the whole $\Rn\times\R$, we set it equal to $+\infty$ when $s\in\Rn\setminus \Rnm$ or $t\in\R\setminus [0,1]$. We will still use the $\cL[u_t]$ denotation for the Legendre transform of $u_t(s)$ in $s$.

First let us assume the functions $u_j$ to be bounded: $u_j\ge -M_j$.
Take any bounded toric subgeodesic $v_t$ connecting $u_0$ and $u_1$. Replacing it, if necessary, with $\max\{v_t, u_0-M_1\,t,u_1-M_0\,(1-t)\}$, we can assume that $v_t$ converges uniformly to $ u_j$ as $t\to j\in\{0,1\}$.
It follows from the definition of the Legendre transform that
$$\cL^*[\check v](a,r)=\sup_{0\le t\le 1}\{\cL[\check v_t](a)+t\cdot r\}$$
for the convex image $\check v$ of $v_t$. Therefore, we have
$$\cL^*[\check v](a,r)\ge (\cL[\check v_t](a)+t\cdot r)|_{t=0}=\cL[\check u_0](a)$$
and
$$\cL^*[\check v](a,r)\ge (\cL[\check v_t](a)+t\cdot r)|_{t=1}=\cL[\check u_1](a)+r,$$
so
\beq\label{checkL}\cL^*[\check v](a,r)\ge \max\{\cL[\check u_0](a), \cL[\check u_1](a)+r\}.\eeq

Let now $\check w_t(s)$ be defined by the right hand side of (\ref{guan}). Evidently, it is convex in $(s,t)$. Since $\cL[\check w_t]\to \cL[\check u_j]$ uniformly as $t\to j$, its plurisubharmonic image $w\in W(u_0,u_1)$. Furthermore,
\begin{eqnarray*}\cL^*[\check w](a,r) &=& \sup_{0\le t\le 1}\{(1-t)\cL[\check u_0](a)+t\cL[\check u_1]+t\cdot r\}\\
&=& \sup_{0\le t\le 1}\{t\left[r-\cL[\check u_0](a)+\cL[\check u_1](a)\right]+\cL[u_0]\}\\
&=& \max \{\cL[\check u_0](a), \cL[\check u_1](a)+r\}.
\end{eqnarray*}
Comparing this with (\ref{checkL}), we conclude $\cL^*[\check w]\le \cL^*[\check v]$ and so, $\check w\ge \check v$ for any subgeodesic $v_t$. Therefore, $w_t$ is the geodesic.

In the general case of unbounded $u_j$, we apply what has already been proved to the cutoff functions $u_{j,N}=\max\{u_j,-N\}$. The convex images of their geodesics are $$\check u_{t,N} =\cL\left[ (1-t)\cL[\check u_{0,N}] + t\cL[\check u_{1,N}]\right],$$
and letting $N\to\infty$, we get (\ref{guan}).
\end{proof}

\medskip
{\it Remark.} The same proof works for toric plurisubharmonic functions on arbitrary bounded hyperconvex Reinhardt domains.

\section{Applications}\label{Appl}

Here we apply the results to geodesics of toric relative extremal functions and to toric singularities. Using this, the uniqueness part of Theorem~\ref{theo} is proved.

\bigskip
\noindent
{\bf 6.1.} As was indicated in Section~\ref{Motiv}.2, the relative extremal function $\omega_K$ can be expressed in terms of the support function of the polar $L^\circ$ (denoted there by $\Gamma_L$) of the logarithmic image $L=\Log K$ of $K$. Namely,
\beq\label{omegal0} \omega_K(z)=\max \{h_{L^\circ}(\Log\, z),-1\}, \quad z\in \D^n.\eeq

 Now take $K_0,K_1\Subset\D^n$ that are the closures of two complete logarithmically convex Reinhardt domains, and let $u_j=\omega_{K_j}$. By (\ref{omegal0}),
 $$ \check u_j =\max\{h_{L_j^\circ},-1\},\quad j=0,1,$$
 and thus, by Proposition~\ref{Leg},
 $$\cL[\check u_j] =\max\{h_{L_j}+1,0\},\quad j=0,1.$$
Therefore, (\ref{guan}) gives us the following explicit formula for computing the geodesics of relative extremal functions.

\begin{theorem}\label{formula}
Let $K_j\subset\D^n$ be the closures of nonempty complete logarithmically convex Reinhardt domains $D_j$, $j=0,1$, then the geodesic $u_t$  of $u_j=\omega_{K_j}$ represents as
$$ \check u_t=\cL[(1-t)\max\{h_{L_0}+1,0\} + t \max\{h_{L_1}+1,0\}],\quad 0<t<1,$$
where $L_j\in UC_-$ are the logarithmic images of $K_j$.
\end{theorem}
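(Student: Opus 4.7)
The plan is to chain together three ingredients already established in the paper: the explicit representation (\ref{omegal0}) of a toric relative extremal function in terms of the copolar $L^\circ$ of its logarithmic image, Proposition~\ref{Leg} computing the Legendre transform of exactly the cutoff functions that appear, and Theorem~\ref{guan1} expressing the geodesic as the Legendre transform of an affine interpolation of the Legendre transforms at the endpoints. Once these are lined up, the statement is a direct substitution.

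First I would unpack the endpoint data. Since $K_j = \overline{D_j} \Subset \D^n$ with $D_j$ a complete logarithmically convex Reinhardt domain, the logarithmic image $L_j = \Log K_j$ belongs to $UC_-$, so its copolar $L_j^\circ$ belongs to $CC_+$ and (\ref{omegal0}) applies:
$$\omega_{K_j}(z) \;=\; \max\{h_{L_j^\circ}(\Log z),\,-1\}, \qquad z\in\D^n.$$
Pulling back by $\Exp$ and extending by $+\infty$ outside $\Rnm$ as in the proof of Theorem~\ref{guan1}, this reads $\check u_j = \max\{h_{L_j^\circ},\,-1\}$ on $\Rnm$. Applying Proposition~\ref{Leg} to each $L_j$ then yields
$$\cL[\check u_j] \;=\; \max\{h_{L_j}+1,\,0\}, \qquad j=0,1.$$

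Second, I would invoke Theorem~\ref{guan1}. The functions $u_j = \omega_{K_j}$ are bounded, nonpositive, toric plurisubharmonic on $\D^n$ with zero boundary values on $\partial\D^n$, so the hypotheses of that theorem are met and the convex image of the geodesic satisfies
$$\check u_t \;=\; \cL\bigl[(1-t)\cL[\check u_0] + t\,\cL[\check u_1]\bigr].$$
Substituting the expressions for $\cL[\check u_j]$ obtained in the previous step gives precisely the claimed formula.

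There is no real obstacle here; the only points requiring a moment of attention are (i) checking that the conventions for the Legendre transform in Proposition~\ref{Leg} (one-sided, on $\Rnm$) and in Theorem~\ref{guan1} (extension by $+\infty$ outside $\Rnm$) are compatible, and (ii) verifying that $\omega_{K_j}$ indeed vanishes on $\partial\D^n$, which is a standard consequence of $K_j \Subset \D^n$ being nonpluripolar. Both are routine and the proof is essentially a two-line substitution.
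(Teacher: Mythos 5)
Your proposal is correct and follows exactly the paper's own argument: the author derives the formula by combining (\ref{omegal0}), Proposition~\ref{Leg}, and Theorem~\ref{guan1} in precisely the same chain of substitutions. Nothing further is needed.
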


\begin{corollary}\label{uniq} In the conditions of Theorem~\ref{formula}, $u_t$ is the relative extremal function for some $t\in (0,1)$ if and only if $K_0=K_1$.
\end{corollary}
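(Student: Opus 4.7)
For the ``if'' direction, if $K_0=K_1$ then the geodesic is constant and equals $\omega_{K_0}$ trivially. For the converse, my plan is to use the assumption $u_t=\omega_M$ to derive an explicit identity of two piecewise-linear convex functions on $\Rnp$, and then analyze it along each ray. First I would reduce to $M$ being the closure of a complete logarithmically convex Reinhardt domain: since $u_t$ is toric and $\omega_M$ is unchanged when $M$ is replaced by $\{z:\omega_M(z)=-1\}$ and then by its logarithmic convex hull, this reduction is harmless. Setting $L_M=\Log M$ (a set in $UC_-$) and combining (\ref{omegal0}) with Proposition~\ref{Leg} gives $\cL[\check u_t]=\max\{h_{L_M}+1,0\}$. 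On the other hand, Theorem~\ref{formula}, together with the observation that $(1-t)\max\{h_{L_0}+1,0\}+t\max\{h_{L_1}+1,0\}$ is already convex and lower semicontinuous (so that $\cL\circ\cL$ acts as the identity on it), yields the pointwise identity
\beq\label{rayid}
(1-t)\max\{h_{L_0}+1,0\}+t\max\{h_{L_1}+1,0\}=\max\{h_{L_M}+1,0\}.
\eeq

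The core step would be a one-dimensional analysis of (\ref{rayid}) along each ray in $\Rnp$. Fixing $a\in\Rnp\setminus\{0\}$ and evaluating at $\lambda a$ for $\lambda\ge 0$, the compactness $K_j\Subset\D^n$ forces $h_{L_j}(a)<0$ strictly, so writing $\alpha_j:=-1/h_{L_j}(a)>0$ the $j$th summand on the left reduces to the tent $\max\{1-\lambda/\alpha_j,0\}$. Taking WLOG $\alpha_0\le\alpha_1$, the left-hand side of (\ref{rayid}) becomes a continuous piecewise-linear convex function of $\lambda$ whose slope has strictly positive jumps of sizes $(1-t)/\alpha_0$ at $\lambda=\alpha_0$ and $t/\alpha_1$ at $\lambda=\alpha_1$ --- hence \emph{two} genuine kinks whenever $\alpha_0<\alpha_1$. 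The right-hand side, however, is $\max\{1-\lambda/\beta,0\}$ with $\beta:=-1/h_{L_M}(a)>0$, possessing exactly \emph{one} kink. Matching the two sides forces $\alpha_0=\alpha_1$, i.e., $h_{L_0}(a)=h_{L_1}(a)$.

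Since $a\in\Rnp\setminus\{0\}$ was arbitrary and a set in $UC_-$ is uniquely determined by the restriction of its support function to $\Rnp$, this gives $L_0=L_1$ and hence $K_0=K_1$. The most delicate point I anticipate is the initial reduction to logarithmically convex Reinhardt $M$; once (\ref{rayid}) is established, the remainder is routine convex analysis --- essentially a comparison of kink counts for two explicit piecewise-linear functions on each ray --- and the whole argument rests on the machinery already in place, Theorem~\ref{formula} and Proposition~\ref{Leg}.
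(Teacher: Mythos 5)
Your proof is correct, and its first half coincides with the paper's: both derive, from Theorem~\ref{formula} and Proposition~\ref{Leg} (using that the convex combination of the two cutoff functions is already convex and lsc), the pointwise identity $(1-t)\max\{h_{L_0}+1,0\}+t\max\{h_{L_1}+1,0\}=\max\{h_{L}+1,0\}$ on $\Rnp$, which is exactly the paper's equation (\ref{cor}). Where you diverge is the endgame. The paper reads off two set-theoretic consequences: near the origin the cutoffs are inactive, giving $h_L=(1-t)h_{L_0}+th_{L_1}$ and hence $L=(1-t)L_0+tL_1$, while near $\delta L^\circ$ the identity forces $L^\circ=L_0^\circ\cap L_1^\circ$, and these two descriptions of $L$ are shown to be incompatible unless $L_0=L_1$. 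You instead restrict to each ray $\lambda\mapsto\lambda a$ and count kinks: the left side is a convex piecewise-linear function with two genuine slope jumps (of sizes $(1-t)/\alpha_0$ and $t/\alpha_1$) whenever $h_{L_0}(a)\neq h_{L_1}(a)$, while the right side has exactly one, so the support functions must agree on all of $\Rnp$ and hence $L_0=L_1$. Your version is more elementary and arguably more self-contained --- the paper's final ``which contradicts (\ref{sum}) unless $L_0=L_1$'' leaves the incompatibility of the Minkowski-sum and intersection-of-copolars descriptions to the reader, whereas your kink count settles the matter ray by ray; the paper's version, on the other hand, records the structurally interesting intermediate facts about $L$ and $L^\circ$. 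Two small points worth tightening: your preliminary reduction to a logarithmically convex complete Reinhardt $M$ (which the paper silently assumes) deserves the one-line justification that $\omega_M$ is unchanged under passing to the appropriate hull, and the final step uses that a set in $UC_-$ is determined by its support function on the open orthant, which follows from lower semicontinuity and completeness; both are routine.
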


\begin{proof} Assume $u_t=\omega_K$ for some $t\in (0,1)$, and let $L=\Log K$. Then
\beq\label{cor} (1-t)\max\{h_{L_0}(a)+1,0\} + t \max\{h_{L_1}(a)+1,0\}=\max\{h_L(a)+1,0\},\ a\in\Rnp.\eeq
In particular, for all $a$ close to the origin, $h_L(a)=(1-t)h_{L_0}(a)+ t h_{L_1}(a)$, which implies
\beq\label{sum} L=(1-t)L_0 + t L_1.\eeq

On the other hand, (\ref{cor}) for $a$ close to $\delta L^\circ=\{x\in\Rn:\: h_A(x)= -1\}$ implies $L^\circ =L_0^\circ\cap L_1^\circ$, which contradicts (\ref{sum}) unless $L_0=L_1$.
\end{proof}

\begin{corollary}\label{uniq1} The equality statement of Theorem~\ref{theo} is true.
\end{corollary}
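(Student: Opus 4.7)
The plan is to reduce Corollary~\ref{uniq1} to Corollary~\ref{uniq} by tracking the equality case back through the chain used to prove Theorem~\ref{theo}. Since \eqref{CoBM} was derived from \eqref{capin} via $\Capa(K)=n!\,\Covol(P)$, an equality in \eqref{CoBM} at some $t_0\in(0,1)$ is equivalent to $\Capa(K_{t_0}^\times)=(1-t_0)\Capa(K_0)+t_0\Capa(K_1)$, with $L_j=P_j^\circ$ and $K_j=\overline{\Exp(L_j)}$.

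Next I would invoke the affinity of the Cegrell energy $\BE$ along geodesics, recalled in Section~5: for the geodesic $u_t$ connecting $u_j=\omega_{K_j}$,
$$\BE(u_{t_0})=(1-t_0)\BE(\omega_{K_0})+t_0\BE(\omega_{K_1})=-(1-t_0)\Capa(K_0)-t_0\Capa(K_1),$$
so the assumed equality becomes $\BE(u_{t_0})=-\Capa(K_{t_0}^\times)=\BE(\omega_{K_{t_0}^\times})$. I then want the pointwise comparison $u_{t_0}\le \omega_{K_{t_0}^\times}$. By Theorem~\ref{formula}, $\check u_{t_0}=\cL[\phi_{t_0}]$ with $\phi_{t_0}=(1-t_0)\max\{h_{L_0}+1,0\}+t_0\max\{h_{L_1}+1,0\}$. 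Using $\max\{h_{L_j}(a)+1,0\}\ge h_{L_j}(a)+1\ge \langle a,s_j\rangle+1$ for any $s_j\in L_j$ and $a\in\overline\Rnp$, together with $\phi_{t_0}=+\infty$ off $\overline\Rnp$, one gets $\phi_{t_0}(a)\ge \langle a,(1-t_0)s_0+t_0 s_1\rangle+1$. Taking the Legendre transform at $s=(1-t_0)s_0+t_0 s_1\in \Log K_{t_0}^\times$ yields $\check u_{t_0}(s)\le -1$, hence $u_{t_0}\le -1$ on $K_{t_0}^\times$ and therefore $u_{t_0}\le \omega_{K_{t_0}^\times}$.

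The final step combines $u_{t_0}\le\omega_{K_{t_0}^\times}$ with $\BE(u_{t_0})=\BE(\omega_{K_{t_0}^\times})$: by the strict monotonicity of $-\BE$ on Cegrell's class $\F_1$, this forces $u_{t_0}=\omega_{K_{t_0}^\times}$, so the geodesic itself is a relative extremal function at $t_0$. Corollary~\ref{uniq} then delivers $K_0=K_1$, whence $L_0=L_1$, and through the copolar duality $P_j=L_j^\circ$ of Section~\ref{Copolars}, $P_0=P_1$.

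The main obstacle I anticipate is precisely this last step: turning an equality of energies for two comparable functions in $\F_1$ into their identity is a standard but nontrivial strict monotonicity property of the Cegrell energy, which will have to be cited rather than reproved. The reduction to capacities, the energy affinity, and the Legendre comparison are all routine consequences of Theorems~\ref{guan1}--\ref{formula} and the definitions in Section~\ref{Copolars}.
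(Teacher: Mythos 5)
Your proposal is correct and follows essentially the same route as the paper: reduce equality in (\ref{CoBM}) to equality in (\ref{capin}), identify that equality with the geodesic $u_{t_0}$ being the relative extremal function of $K_{t_0}^\times$, and then invoke Corollary~\ref{uniq}. The only difference is that the paper states the equivalence ``equality in (\ref{capin}) iff $u_t=\omega_{K_t^\times}$'' as a known fact (from \cite{R16}), whereas you re-derive the needed implication via energy affinity, the pointwise bound $u_{t_0}\le\omega_{K_{t_0}^\times}$, and the strict monotonicity of the Cegrell energy, which you correctly flag as a standard fact to be cited.
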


\begin{proof} Since an equality in (\ref{capin}) occurs if and only if the geodesic $u_t$ is the relative extremal function of $K_t^\times$, the statement follows from Corollary~\ref{uniq}.
\end{proof}

\bigskip
\noindent
{\bf 6.2.} This can be applied to investigation of residual Monge-Amp\'ere masses of toric plurisubharmonic singularities.
As was mentioned in Section~\ref{Motiv}.1, the support function $h_P$ of a set $P\in CC_+$ is the convex image of an indicator $\Psi_P\in\PSH(\D^n)$, and the set $L_\vph\subset\Rnm$ defined by (\ref{Lvph}) is the copolar to $P$. The copolar addition on $CC_+$ induces the copolar addition of the indicators:
$$ \Psi_P\oplus\Psi_Q= \Psi_{P\oplus Q};$$
furthermore, it gives rise to the copolar combinations
\beq\label{copind}\Psi_t^\oplus= \Psi_{P_t^\oplus}\eeq
of $\Psi_{P_j}$ for $P_0,P_1\in CC_+$, where the sets
$ P_t^\oplus$ are defined by (\ref{copcom}).

For example, cosimpleces $P_j=\{a\in\Rnp:\: \langle a,b^j\rangle\ge 1\}$ generate the indicators
$$\Psi_{P_j}(z)=\Phi_{b^j}(z):=\max_{1\le k\le n}\frac{\log|z_k|}{b_k^j},\quad j=0,1,$$
so in this case
$$\Psi_t^\oplus=\Phi_{c(t)} ,$$
where the components $c_k$ of the weight vector $c(t)$ are defined by
$$\frac1{c_k}=\frac{1-t}{b_k^0}+\frac{t}{b_k^1},\quad 1\le k\le n.$$

By (\ref{Psi}) and (\ref{Nvol}),
the Monge-Amp\'ere mass $(dd^c\Psi_t^\oplus)^n(0)$ equals $n!\,\Covol(P_t^\oplus)$, so Theorem~\ref{theo} and Corollary~\ref{uniq} imply

\begin{corollary} Let $\Psi_j$ be indicators corresponding to complete Reinhardt sets $K_j={\overline D_j}\subset\D^n$, $j=0,1$. Then the Monge-Amp\'ere measures of their copolar combinations (\ref{copind}) satisfy
$$ (dd^c\Psi_t^\oplus)^n\le (1-t)(dd^c\Psi_0)^n + t\,(dd^c\Psi_1)^n,\quad 0<t<1,$$
and the inequality is strict unless $\Psi_0=\Psi_1$.
\end{corollary}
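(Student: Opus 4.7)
The plan is to reduce the inequality to a scalar inequality between total masses and then quote Theorem~\ref{theo}. The essential observation is that the Monge--Amp\`ere measures of indicators are all supported at a single point, so inequality of measures is the same as inequality of masses.

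First I would recall from Section~\ref{Motiv}.1 that the Monge--Amp\`ere measure of an indicator $\Psi_P$ is concentrated at the origin, namely $(dd^c\Psi_P)^n = N_{\Psi_P}\,\delta_0$ (formula~(\ref{Psi})), and that its Newton number is $N_{\Psi_P}=n!\,\Covol(P)$ by (\ref{Nvol}). Applied to $P_0$, $P_1$, and to $P_t^\oplus$, together with the very definition (\ref{copind}) of $\Psi_t^\oplus$ as the indicator associated to $P_t^\oplus$, this gives
\begin{equation*}
(dd^c\Psi_j)^n = n!\,\Covol(P_j)\,\delta_0,\qquad (dd^c\Psi_t^\oplus)^n = n!\,\Covol(P_t^\oplus)\,\delta_0.
\end{equation*}
Since all three measures are supported at $\{0\}$, the asserted inequality between them is equivalent to the single scalar inequality
\begin{equation*}
\Covol(P_t^\oplus)\ \le\ (1-t)\,\Covol(P_0)\,+\,t\,\Covol(P_1),
\end{equation*}
which is precisely (\ref{CoBM}) of Theorem~\ref{theo}. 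So the measure inequality is immediate from the already-established covolume inequality.

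For the strictness part, I would invoke the equality clause of Theorem~\ref{theo} (established in Corollary~\ref{uniq1}): an equality in (\ref{CoBM}) for some $t\in(0,1)$ forces $P_0=P_1$. The correspondence $P\mapsto \Psi_P$ between sets in $CC_+$ and their indicators is a bijection (the support function of $P$ is the convex image $\check\Psi_P$), so $P_0=P_1$ is equivalent to $\Psi_0=\Psi_1$. Combining, an equality in the measure inequality at some $t\in(0,1)$ is equivalent to $\Psi_0=\Psi_1$, giving strictness otherwise.

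There is no real obstacle here: the statement is essentially a translation of Theorem~\ref{theo} from the language of covolumes of convex sets in $\Rnp$ to the language of Monge--Amp\`ere measures of their associated indicators. The only point that needs to be checked carefully is that the copolar combination of indicators was \emph{defined} (in (\ref{copind})) so that the convex set attached to $\Psi_t^\oplus$ is exactly $P_t^\oplus$; once that identification is made, both the inequality and its equality case transfer verbatim from Theorem~\ref{theo} and Corollary~\ref{uniq1}.
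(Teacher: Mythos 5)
Your proposal is correct and follows the same route as the paper: the paper likewise uses (\ref{Psi}) and (\ref{Nvol}) to identify each Monge--Amp\`ere measure as the point mass $n!\,\Covol(\cdot)\,\delta_0$ and then invokes Theorem~\ref{theo} together with its equality clause. The observation that everything reduces to a scalar comparison of Newton numbers is exactly the paper's argument.
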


{\it Remark.} As follows from \cite[Thm. 6.2]{R16}, no pair $\Psi_P,\Psi_Q$ of {\sl different} indicators can be connected by a geodesic because it does not exceed $\Psi_{P\cap Q}\le \min\{\Psi_P,\Psi_Q\}$ (this can also be easily deduced from Theorem~\ref{guan1}). Moreover, the main result of \cite{Ho} implies that two toric plurisubharmonic functions $\vph_0,\vph_1$ with zero boundary values and isolated singularities at $0$ can be connected by a geodesic $\vph_t$ (in the sense that $\vph_t\to \vph_j$ in capacity as $t\to j\in\{0,1\}$) if and only if their indicators $\Psi_{\vph_j}$ (\ref{indic}) are equal.

\bigskip
{\small {\bf Acknowledgement.}
The author thanks Genki Hosono for useful conversations.}

\vskip1cm

Tek/Nat, University of Stavanger, 4036 Stavanger, Norway

\vskip0.1cm

{\sc E-mail}: alexander.rashkovskii@uis.no


\begin{thebibliography}{19}

\bibitem{ARZ}
A. Aytuna, A. Rashkovskii and V. Zahariuta, {\sl Widths asymptotics for a pair of
Reinhardt domains}, Ann. Polon. Math. {\bf 78} (2002), 31--38.

\bibitem{BB}
R. Berman and B. Berndtsson, {\sl
Real Monge-Amp\'ere equations and K\"{a}hler-Ricci solitons on toric log Fano varieties},
Ann. Fac. Sci. Toulouse Math. (6) {\bf 22} (2013), no. 4, 649--711.

\bibitem{BLYZ}
 K.J. B\"{o}r\"{o}czky, E. Lutwak, D. Yang, G. Zhang, {\sl The log-Brunn-Minkowski inequality}, Adv. Math. 231 (2012), no. 3-4, 1974---1997.

\bibitem{CE}
D. Cordero-Erausquin, {\sl Santal\'{o}'s inequality on $\Cn$ by complex interpolation}, C. R. Acad. Sci. Paris, Ser. I, {\bf 334} (2002), 767--772.

\bibitem{CEK}
D. Cordero-Erausquin and B. Klartag, {\it Interpolations, convexity and geometric inequalities}, Geometric aspects of functional analysis, 151--168, Lecture Notes in Math., 2050, Springer, 2012.

\bibitem{Gar}
 R.J. Gardner, {\sl The Brunn-Minkowski inequality}, Bull. Amer. Math. Soc. (N.S.) {\bf 39} (2002), no. 3, 355--405.

\bibitem{Gu}
{D. Guan}, {\sl
On modified Mabuchi functional and Mabuchi moduli space of K\"{a}hler metrics on toric bundles},
Math. Res. Lett. {\bf 6} (1999), no. 5-6, 547--555.

\bibitem{G12}
 Complex Monge-Ampère equations and geodesics in the space of Kähler metrics. Edited by V.~Guedj. Lecture Notes in Math., {\bf 2038}, Springer, 2012.

 \bibitem{G15}
{ V. Guedj}, {\sl  The metric completion of the Riemannian space of K\"{a}hler metrics}, arXiv:1401.7857.

 \bibitem{Ho}
 G. Hosono, {\sl Local geodesics between toric plurisubharmonic functions with infinite energy}, arXiv:1606.05235.


 \bibitem{KaK}
 K. Kaveh and A. Khovanskii,  {\sl Convex bodies and multiplicities of ideals}, Proc. Steklov Inst. Math. {\bf 286} (2014), no. 1, 268--284.

\bibitem{KT}
A. Khovanskiĭ and V. Timorin, {\sl
On the theory of coconvex bodies},
Discrete Comput. Geom. {\bf 52} (2014), no. 4, 806--823.

\bibitem{KV}
A. Khovansky and A. Varchenko, {\sl Asymptotics of integrals over vanishing cycles and the Newton polyhedron}, Sov. Math. Dokl. {\bf 32} (1985), 122--127.

\bibitem{Ku1}
{A.G. Kouchnirenko}, {\sl Poly\`edres de Newton et nombres de
Milnor}, Invent. Math. {\bf 32} (1976), 1--31.

\bibitem{LeR}
{ P. Lelong and A. Rashkovskii}, {\sl Local indicators for
plurisubharmonic functions}, J. Math. Pures Appl. {\bf 78} (1999), 233--247.


\bibitem{R3}
{ A. Rashkovskii}, {\sl Newton numbers and residual measures of
  plurisubharmonic functions}, Ann. Polon. Math. {\bf 75} (2000),
no. 3, 213--231.

\bibitem{R4}
{ A. Rashkovskii}, {\sl Lelong numbers with respect to regular
plurisubharmonic weights}, Results Math. {\bf 39} (2001), 320--332.

\bibitem{R9}
{ A. Rashkovskii}, {\sl Tropical analysis of plurisubharmonic singularities}, Tropical and Idempotent Mathematics, 305--315, Contemp. Math., {\bf 495}, Amer. Math. Soc., Providence, RI, 2009.

\bibitem{R16}
{ A. Rashkovskii}, {\sl Local geodesics for plurisubharmonic functions}, Math. Z. (to appear); available at arXiv:1604.04504.

\bibitem{Sa}
C. Saroglou, {\sl
Remarks on the conjectured log-Brunn-Minkowski inequality},
Geom. Dedicata {\bf 177} (2015), 353--365.

\bibitem{XL}
Dongmeng Xi and Gangsong Leng, {\sl Dar's conjecture and the log-Brunn-Minkowski inequality},
J. Differential Geom. {\bf 104} (2016), no. 1, 145--189.



\end{thebibliography}
\end{document}